\documentclass{amsart}
\usepackage{graphicx} 
\usepackage[utf8]{inputenc}
\usepackage{amsfonts}
\usepackage{hyperref}
\DeclareMathOperator{\card}{card}

\hypersetup{pdftitle={A necessary condition for an F-space to be separable},
pdfsubject={Mathematics, },
pdfauthor={Hector},
pdfkeywords={}}

\usepackage{amsmath}
\usepackage{xcolor}
\usepackage{amsthm}
\usepackage{pdflscape}
\usepackage{pgfplots}
\usepackage{mathrsfs}
\usepackage{amssymb}
\newtheorem{thm}{Theorem}[section]
\newtheorem{cor}[thm]{Corollary}
\newtheorem{prop}[thm]{Proposition}

\newtheorem{quest}[thm]{Question}

\theoremstyle{definition}

\newtheorem{exmp}[thm]{Example}

\theoremstyle{remark}
\newtheorem{rem}[thm]{Remark}

\makeatletter
\let\c@equation\c@thm
\makeatother
\numberwithin{equation}{section}

\date{10 August 2026}

\title{A necessary condition for an F-space to be separable}

\begin{document}

\author{H\'ector N Salas }

\title{A necessary condition for  $F$-spaces to be separable}

\address{Department of Mathematical Sciences\\University of Puerto Rico, Mayag\"{u}ez, PR 00681-9018} \email{hector.salas@upr.edu}
\thanks{}
\subjclass[2020]{ Primary 46A03. Secondary: 46A04, 46A16, 46A35  } 
\keywords{Topological vector spaces, F-spaces, Fr\'echet spaces, LF spaces, Countable intersection property for subspaces $(C_0)$, Markushevich basis, Michael line}
\date{}

\begin{abstract} 
A topological vector space  $E$ has the {\it countable intersection property for subspaces} if whenever
$\cap_{i\in I} M_i=\{0\}$ for a family of closed subspaces $M_i$ of $E,$ there is a countable set of indices $I_0\subset I$ such that $\cap_{i\in I_0} M_i=\{0\}.$ We prove that a separable $F$-space satisfies such a property, whereas 
a TVS with an uncountable Markushevich basis does not. We also exhibit a separable TVS that does not satisfy the property.
\end{abstract}

\maketitle

\section{Introduction}

Let TVS stand for a  topological vector space over the complex numbers $\mathbb C.$ The class of TVSs features a wide range of structural properties.
The spaces we consider are assumed to be Hausdorff and complete. All subspaces are assumed to be closed. 
An $F$-space is a complete and metrizable TVS whose
metric is translation-invariant. If, in addition, it has a local convex basis, then it is called a Fr\'echet space.

Let $E$ be a TVS. 

{\it Assume that whenever
$\cap_{i\in I} M_i=\{0\}$ for a family of closed subspaces $M_i$ of $E$ there is a countable set of indices $I_0\subset I$ such that $\cap_{i\in I_0} M_i=\{0\}.$
Then $E$ is said to have the countable intersection property for subspaces.} 

Our aim is to draw attention to this property that we will call ($C_0$).

Corson, in \cite{cor} defined the following property for a Banach space $E:$  

{\it Every collection of closed convex sets of $E$ with empty intersections has a countable subcollection  with an empty intersection. }

 Later, Pol \cite{pol} called it property $(C)$.
In the work of both authors and subsequent papers by other authors on $(C)$, weak and weak$^*$ topologies play an important role. This is also true for one characterization of separability for locally convex TVS, given by Ruf \cite{ruf} Theorem 2.5. 

However, for $(C_0)$ even the existence of the dual is not necessary. As a consequence of Theorem \ref{thm}, $H^p(\mathbb D)$ and $L^p([0,1],dx)$ with $0<p<1$ satisfy $(C_0).$ In the first case the dual does not contain enough functionals and the Hahn-Banach theorem fails, see Duren book \cite{dur}, section 7.5. In the second case, Day \cite{day} showed that there are no nontrivial functionals.

Also, in view of Theorem \ref{thm}, it is natural to ask if $C_0$ is satisfied whenever $E$
is a separable TVS.  A possible obstruction for this is a result of Doma\'nski \cite{dom} which shows a separable TVS with a non separable subspace. In Example 1 of \cite{klm}
K\c{a}kol, Leiderman, and  Morris
gave another example of that situation using the Michael line $\mathbb M$ which is the real line with the topology generated by its usual topology and all the singles $\{x\}$ with $x\in \mathbb P=\mathbb R\setminus \mathbb Q$. They showed that $C_p(\mathbb M),$ 
the continuous functions with the pointwise convergence, is  a separable locally convex space with a non separable subspace $C_p(\mathbb P).$

The paper is organized as follows. In Section 2, we present a separable space and several non-separable spaces failing property $(C_0).$ As a key tool for some of these constructions, we utilize the concept of a Markushevich basis. In Section 3, we prove the main theorem of the paper and provide several of its consequences. Finally, in Section 4, we pose a few open natural questions and show a possible extension of $(C_0)$ to $(C_1),~(C_2)$, and so forth.

Furthermore, these TVSs can be considered over the field of real numbers, with a few notable exceptions such as $H^p(\mathbb D)$.

\section{Examples and basic properties}

The result of \cite{klm} that is mentioned above allows us to show that there is a separably locally convex TVS that fails the countable intersection property for subspaces.

\begin{prop}  $C_p(\mathbb M)$ does not satisfy $(C_0).$
\end{prop}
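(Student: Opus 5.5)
The idea is to exhibit an uncountable family of closed subspaces of $C_p(\mathbb M)$ with trivial intersection, no countable subfamily of which has trivial intersection. The natural candidates are the kernels of point evaluations at irrational points. For each $x\in\mathbb P$ let
\[
M_x=\{f\in C_p(\mathbb M): f(x)=0\}.
\]
Evaluation at $x$ is continuous in the pointwise topology, so $M_x$ is a closed subspace.

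First, $\bigcap_{x\in\mathbb P}M_x=\{0\}$. A function $f$ in this intersection vanishes on $\mathbb P$. Every rational point $q$ is a limit, in the usual topology, of irrationals. Neighborhoods of rational points in $\mathbb M$ are the usual ones, so continuity of $f$ at $q$ forces $f(q)=0$. Hence $f\equiv 0$.

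Second, let $I_0=\{x_n\}\subset\mathbb P$ be countable. I need a nonzero $f\in C_p(\mathbb M)$ vanishing at every $x_n$. Pick an irrational $y\notin I_0$. The singleton $\{y\}$ is open in $\mathbb M$, and it is also closed since $\mathbb M$ is Hausdorff. So the indicator function $\chi_{\{y\}}$ is continuous on $\mathbb M$. It is nonzero and vanishes on all of $I_0$, so it lies in $\bigcap_{n}M_{x_n}$. Therefore no countable subfamily has trivial intersection, and $(C_0)$ fails.

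Separability of $C_p(\mathbb M)$ is not needed for the failure itself; it comes from \cite{klm} and makes the example pertinent, since the space is separable and still fails $(C_0)$. The only step requiring care is the first, namely that a continuous function on $\mathbb M$ vanishing on $\mathbb P$ is identically zero. It reduces to the remark that rational points keep their Euclidean neighborhoods, so I do not expect a genuine obstacle. Completeness is not an issue either, since the paper's standing assumptions are not used in this argument.
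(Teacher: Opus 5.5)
Your proof is correct and is essentially the paper's argument: subspaces $M_x$ indexed by $x\in\mathbb P$, with the continuous indicator $\chi_{\{y\}}$ of an irrational $y$ outside the countable index set showing that countable intersections are nonzero (the paper leaves this witness implicit). The only difference is that the paper also builds the condition $f|_{\mathbb Q}=0$ into each $M_x$, which makes $\bigcap_{x\in\mathbb P}M_x=\{0\}$ immediate, whereas you take plain kernels of point evaluations and recover $f(q)=0$ from the Euclidean neighborhoods of rational points; both versions work.
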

\begin{proof}
For each $x\in \mathbb P$ let \[M_x=\{f\in C_p(\mathbb M):f(q)=0 ~\forall q \in \mathbb Q \text{ and } f(x)=0\}.\] Then $\cap_{x\in \mathbb P} M_x=\{0\} $
and any countable intersection is not $\{0\}.$\end{proof}

For a TVS $E$ the direct sum $E=G\bigoplus F$ 
means the usual, namely, $G\cap F=\{0\}$ and the projections $P_G(E)=G$ and $P_F(E)=F$ are continuous. If $E$ is a Hilbert space, it means orthogonal direct sum.

\begin{prop} 
(i) If $E$ satisfies ($C_0$), so does any closed subspace $F.$ 

(ii) If, in addition, $F$ is complemented in $E$, then the quotient $E/F$ also satisfies ($C_0$). \label{direct sum}
\end{prop}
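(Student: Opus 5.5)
For (i), the plan is to transfer families from $F$ to $E$ directly. Let $\{M_i\}_{i\in I}$ be closed subspaces of $F$ with $\bigcap_{i\in I} M_i=\{0\}$. Since $F$ is closed in $E$, each $M_i$ is closed in $F$ with the relative topology, hence closed in $E$. So $\{M_i\}$ is also a family of closed subspaces of $E$ with trivial intersection. Applying $(C_0)$ in $E$ gives a countable $I_0\subset I$ with $\bigcap_{i\in I_0}M_i=\{0\}$, which is exactly $(C_0)$ for $F$. The only point to check is that closedness in $F$ implies closedness in $E$, and this holds because $F$ is closed.

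For (ii), I will use the complement to identify $E/F$ with a closed subspace of $E$ and then invoke (i). Write $E=F\bigoplus G$ with continuous projections $P_F,P_G$, where $G=P_G(E)=\ker P_F$ is closed. Consider the map $T:G\to E/F$ given by $T(g)=g+F$. It is continuous, being the restriction of the quotient map $q$. It is injective because $G\cap F=\{0\}$, and surjective because $x=P_Fx+P_Gx$ gives $q(x)=q(P_Gx)$.

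The inverse of $T$ is $x+F\mapsto P_Gx$. This is well defined since $P_G$ vanishes on $F$. It is continuous because $P_G=T^{-1}\circ q$, and $q$ is open, so continuity of $P_G$ passes to the quotient: for an open $U\subset G$, the preimage of $U$ under $T^{-1}$ equals $q(P_G^{-1}(U))$, which is open. Thus $T$ is a topological isomorphism from $G$ onto $E/F$.

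Property $(C_0)$ is clearly invariant under topological linear isomorphisms, since closed subspaces and intersections correspond. Since $G$ is a closed subspace of $E$, part (i) shows $G$ has $(C_0)$, and therefore so does $E/F$.

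The main obstacle is the isomorphism $E/F\cong G$, specifically the continuity of the inverse. This is where openness of the quotient map and continuity of $P_G$ are used. We do not need any open mapping theorem, which would in any case be unavailable for a general TVS, because complementation is assumed in the strong sense with continuous projections.
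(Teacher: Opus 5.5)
Your proof is correct and follows the paper's approach. Part (i) is the same observation: a closed subspace of the closed subspace $F$ is closed in $E$. For (ii), the paper likewise uses the complement $G$ to identify closed subspaces of $E/F$ with subspaces of the form $M\bigoplus F$, $M\subset G$, so that intersections in $E/F$ become intersections of closed subspaces of $G\subset E$. You make this identification explicit as a topological isomorphism $G\cong E/F$, check continuity of the inverse via openness of the quotient map, and then invoke (i); this supplies details the paper leaves implicit.
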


\begin{proof}
(i) Let $F$ be a subspace of $E.$ Then $M$ be a subspace of $F$ implies that $M$ is also a subspace of $E.$

(ii) Let $E= G\bigoplus F.$ Then the relation between subspaces $E$ and $E/F$ is $\overline{M}=M\bigoplus F.$ Thus $\cap_{i\in I}\overline{M_i} =\{0\} $ translates to 
$\cap_{i\in I}{M_i} =\{0\}. $
\end{proof}
Let $T$ be a linear and bicontinuous one-to-one mapping from $E$ onto $G.$ Then $E$ satisfies ($C_0$) if and only if  $G$ does.

We provide below examples of TVS that do not satisfy ($C_0$).

Let $E$ be a TVS and $E^*$ be its dual. The pairs $x_i\in E$ and $x_i^*\in E^*$ form a Markushevich basis (M-basis) if

(a) Biorthogonality: $x_i^*(x_j)=\delta_{i,j}$

(b) Completeness: $E=\overline{\text{span}} \{x_i:i\in I\}.$

(c) Totality: $\cap_{i \in I} Ker(x_i^*)=\{0\}.$

\begin{rem} Examples of spaces with an M-basis where the index set  $I$ is uncountable.

(1) $\ell^p(I)=\{f:I\longrightarrow \mathbb C:\sum_{i\in I} |f(i)|^p<\infty \}.$ If $0<p<1$ the space is not locally convex, while if $1<p$ it is a Banach space.

(2) $c_0(I)=\{f:I\longrightarrow  \mathbb C: \forall \epsilon >0, \{i:|f(i)|>\epsilon \text{ is finite}\}\}$
\end{rem}

\begin{prop} \label{marku}If $E$ has an M-basis and $\card (I)> \aleph_0,$ then $E$ does not satisfy $(C_0).$  
\end{prop}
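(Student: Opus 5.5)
The natural family to use is $M_i=\operatorname{Ker}(x_i^*)$ for $i\in I$. Each $M_i$ is a closed subspace, because $x_i^*\in E^*$ is continuous. By the totality condition (c), $\cap_{i\in I} M_i=\{0\}$. It then suffices to show that no countable subfamily has trivial intersection, so that $E$ fails $(C_0)$.

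To do this, fix a countable set $I_0\subset I$. Since $\card(I)>\aleph_0$, the complement $I\setminus I_0$ is nonempty, indeed uncountable. Pick any $j\in I\setminus I_0$ and consider the basis vector $x_j$. By biorthogonality (a), $x_i^*(x_j)=\delta_{i,j}=0$ for every $i\in I_0$, since $i\neq j$. Hence $x_j\in \cap_{i\in I_0} M_i$. Moreover $x_j\neq 0$, because $x_j^*(x_j)=1$. Therefore $\cap_{i\in I_0} M_i\neq\{0\}$.

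The only point needing care is that $x_j\neq 0$ and that the kernels are closed. Both follow immediately from the definition of an M-basis, so I expect no real obstacle. The completeness condition (b) is not needed.
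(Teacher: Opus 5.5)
Your proof is correct and is essentially the paper's argument: the paper takes $M_j=\overline{\text{span}}\{x_i:i\in I\setminus\{j\}\}$, which under completeness (b) actually coincides with your $\operatorname{Ker}(x_j^*)$ (it lies inside $\operatorname{Ker}(x_j^*)$, and $M_j+\mathbb{C}x_j$ is closed and dense, hence all of $E$), and it uses the same witness $x_j\in\bigcap_{i\in I_0}M_i$ for $j\notin I_0$. Your description has the small advantage that $\bigcap_{i\in I}M_i=\{0\}$ is literally condition (c); the paper asserts this step without noting that it needs $M_j\subseteq\operatorname{Ker}(x_j^*)$ together with totality.
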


\begin{proof}
 Since $x_j\notin M_j=\overline{\text{span}}\{x_i:i\in I\setminus\{j\}\}$ and $x_j\in M_i$ if $i \neq j,$ we have $\cap_{i\in I} M_i=\{0\}$ but $x_j\in \cap_{i\in I_0} M_i$ if $I_0$ is countable and $j\notin I_0.$   
\end{proof}

In 1968, Amir and Lindenstraus \cite{amli} proved that every weakly compact generated Banach space admits a Projectional Resolution of the Identity.
They used this to show that all WCG spaces (and thus all reflexive spaces) admit a Markushevich basis. These results are also in the book by Fabian et al \cite{fhhmpz},  chapter 11.

\begin{cor} Every non separable reflexive Banach space does not satisfy $(C_0).$
\end{cor}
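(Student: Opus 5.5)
The plan is to combine the Amir--Lindenstrauss theorem quoted above with Proposition \ref{marku}. Let $E$ be a non-separable reflexive Banach space. First I show that $E$ is weakly compactly generated. By reflexivity and Kakutani's theorem, the closed unit ball $B_E$ is weakly compact, and it clearly spans a dense (indeed the whole) subspace. So $E$ is WCG, and by the Amir--Lindenstrauss result (see also \cite{fhhmpz}, chapter 11) $E$ admits a Markushevich basis $\{(x_i,x_i^*):i\in I\}$ with $x_i^*\in E^*$.

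Second, I check that the index set $I$ must be uncountable. Completeness (b) says $E=\overline{\text{span}}\{x_i:i\in I\}$. If $I$ were countable, the rational (or $\mathbb Q+i\mathbb Q$) linear combinations of the $x_i$ would form a countable dense subset of $E$. That would make $E$ separable, a contradiction. Hence $\card(I)>\aleph_0$.

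Third, Proposition \ref{marku} applies directly: the closed subspaces $M_j=\overline{\text{span}}\{x_i:i\neq j\}$ have trivial intersection, but no countable subfamily does. Therefore $E$ fails $(C_0)$.

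No step here is a real obstacle once the cited structural theorem is granted. The only point needing care is the observation that the M-basis produced for a WCG space is a genuine M-basis in the sense used in the paper, that is, biorthogonal, complete and total, so that Proposition \ref{marku} is applicable. This is exactly the content of the Amir--Lindenstrauss theorem as stated in \cite{fhhmpz}.
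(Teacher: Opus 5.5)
Your proposal is correct and follows the paper's argument: the paper invokes the Amir--Lindenstrauss M-basis, notes that it is uncountable, and applies Proposition \ref{marku}. You additionally spell out two details the paper leaves implicit: why a reflexive space is WCG (Kakutani), and why a countable M-basis would force separability.
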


\begin{proof}
    The M-basis provided by \cite{amli} is uncountable. 
\end{proof}

Let $c_0(\mathbb N), ~\ell^1(\mathbb N),$ and $ \ell^{\infty}(\mathbb N)$ be denoted by $c_0, ~\ell^1,$ and $\ell^{\infty}$
respectively.

The non-reflexive spaces $c_0$ and $\ell^1$ satisfy $(C_0)$ since they are separable and  covered by Theorem   
 \ref{thm}, and they also possess a Schauder bases (and consequently M-bases), but we cannot use Proposition \ref{marku}.

The following spaces are non-reflexive. In particular, $\ell^{\infty},$ does not have an M-basis. This is a consequence of a result by Johnson \cite{joh}.

\begin{prop} (i) $L^{\infty}([0,1],dx),$ 
(ii) ~$\ell^{\infty},$ 
and (iii) ~ $\ell^{\infty} /c_0,$
do not satisfy ($C_0$). \label{L infty}
\end{prop}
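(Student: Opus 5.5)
The plan is to use Proposition \ref{direct sum}(i): failure of $(C_0)$ passes up from closed subspaces. So for each of the three spaces it suffices to find a closed subspace that fails $(C_0)$. The natural candidate is an isometric copy of a non-separable space with an uncountable M-basis, after which Proposition \ref{marku} finishes. The remark after Proposition \ref{direct sum} makes $(C_0)$ invariant under isomorphisms onto closed subspaces, so we only need an isomorphic copy.

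For (ii), I would embed $\ell^2(\Gamma)$ (or $c_0(\Gamma)$) with $\card(\Gamma)=\mathfrak c$ into $\ell^{\infty}$. The quickest route is the standard fact that every Banach space of density at most $\mathfrak c$ whose dual is weak$^*$ separable embeds isometrically into $\ell^\infty$. Concretely, $\ell^2(\Gamma)$ with $\card\Gamma=\mathfrak c$ has a weak$^*$ separable dual: take $\Gamma=\{0,1\}^{\mathbb N}$ and use countably many functionals built from finite-stage cylinder indicators. Then take a norming sequence $(f_n)$ in the dual ball and map $x\mapsto (f_n(x))_n$. This gives a closed subspace isomorphic to a non-separable reflexive space, which fails $(C_0)$ by the Corollary above.

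A more elementary alternative avoids duality and works directly in $\ell^\infty$. Take an almost disjoint family $\{A_\gamma:\gamma<\mathfrak c\}$ of infinite subsets of $\mathbb N$, and set $M_\gamma=\{f\in\ell^\infty: f=0 \text{ off } \bigcup_{\beta\neq\gamma}A_\beta\}$. This looks messy, so instead I would define the closed subspaces $M_\gamma=\{f: f|_{A_\gamma}\in c_0(A_\gamma)\}$. If $f\in\bigcap_\gamma M_\gamma$ with $\limsup|f|>0$, then some infinite set where $|f|>\epsilon$ is almost contained in no $A_\gamma$. This is not automatically a contradiction, but choosing a maximal almost disjoint (MAD) family gives one. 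So $\bigcap_\gamma M_\gamma=c_0$. We therefore work in the quotient $\ell^\infty/c_0$ via $N_\gamma=M_\gamma/c_0$, which are closed. Their intersection is $\{0\}$ by maximality. For a countable set $\{\gamma_n\}$, a MAD family is never countable-covering: pick, by diagonalization, an infinite $B$ almost disjoint from each $A_{\gamma_n}$. Then $[\chi_B]\neq0$ lies in every $N_{\gamma_n}$. This proves (iii) directly, without needing complementation.

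For (ii) I will take the embedding route above, since $\ell^\infty/c_0$ is not a subspace of $\ell^\infty$ in a way covered by Proposition \ref{direct sum}. For (i), $\ell^\infty$ embeds isometrically into $L^\infty[0,1]$: choose disjoint intervals $I_n$ and map $(a_n)\mapsto\sum a_n\chi_{I_n}$. Then (i) follows from (ii) by Proposition \ref{direct sum}(i).

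The main obstacle is (ii), namely verifying that $\ell^2(\mathfrak c)$ embeds in $\ell^\infty$. The step that actually needs checking is weak$^*$ separability of its dual ball, equivalently a countable norming set. If that is awkward, I would fall back on giving a direct family for $\ell^\infty$ modeled on the MAD argument, possibly using independent families instead.
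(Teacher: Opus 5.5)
\textbf{The step that fails is the embedding in (ii).} No non-separable reflexive space, and no $c_0(\Gamma)$ with $\Gamma$ uncountable, is isomorphic to a subspace of $\ell^{\infty}$.

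If $X\subset\ell^{\infty}$ is closed, the restrictions of the coordinate functionals separate the points of $X$. Their span is therefore weak$^*$ dense, so $X^*$ is weak$^*$ separable. For reflexive $X$ the weak$^*$ and weak topologies on $X^*$ coincide. A weakly separable Banach space is norm separable, because the norm-closed span of a countable weakly dense set is weakly closed. So $X^*$, and hence $X$, would be separable; this rules out $\ell^2(\mathfrak c)$.

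For $X=c_0(\Gamma)$, any countable subset of $X^*=\ell^1(\Gamma)$ is supported in a countable $\Gamma_0$. For $\gamma\notin\Gamma_0$, the weak$^*$ continuous functional $g\mapsto g(\gamma)$ kills its weak$^*$ closed span. Your concrete recipe also breaks: indicators of cylinders in $\{0,1\}^{\mathbb N}$ have $\mathfrak c$ points in their support, so they are not elements of $\ell^2(\Gamma)=\ell^2(\Gamma)^*$.

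Since you deduce (i) from (ii), two of the three parts are unproved. Your fallback is unspecified. Note that the MAD construction actually produces a copy of $c_0(\mathfrak c)$, spanned by the classes $[\chi_{A_\gamma}]$, inside $\ell^{\infty}/c_0$. That is exactly the kind of object that cannot be moved into $\ell^{\infty}$.

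\textbf{Part (iii) is fine.} Take the MAD family infinite, for instance by extending your family of size $\mathfrak c$, so that the diagonalization applies. This is a genuinely different and more elementary route than the paper's. The paper builds an isometric embedding $\Phi\circ T$ of $L^{\infty}([0,1],dx)$ into $\ell^{\infty}/c_0$ from averages over rational intervals and the Lebesgue differentiation theorem, and then transports the subspaces from (i). Your argument needs no measure theory.

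\textbf{How to repair (ii).} Replace $\ell^2(\mathfrak c)$ by $\ell^1(\mathfrak c)$, which does embed. Take an independent family $\{A_\gamma\}_{\gamma<\mathfrak c}$ of subsets of $\mathbb N$ and set $r_\gamma=2\chi_{A_\gamma}-1$. Independence gives $\|\sum a_\gamma r_\gamma\|_\infty=\sum|a_\gamma|$ for finitely many real coefficients, and $\geq\frac12\sum|a_\gamma|$ for complex ones. Hence $\overline{\text{span}}\{r_\gamma\}\cong\ell^1(\mathfrak c)$ has an uncountable M-basis, and Propositions \ref{marku} and \ref{direct sum}(i) give (ii). Your interval embedding then gives (i).

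Alternatively, do as the paper does and prove (i) first. Let $M_x=\{f:\lim_{s\to x}f(s)=0\}$, with essential limits. Compactness of $[0,1]$ gives $\bigcap_x M_x=\{0\}$. Given $\{x_n\}$, choose an open $V\supset\{x_n\}$ of measure $<1$; then $\chi_{[0,1]\setminus V}$ is a nonzero element of $\bigcap_n M_{x_n}$. Then (ii) follows because $L^{\infty}([0,1],dx)$ embeds isometrically into $\ell^{\infty}$, for example via $f\mapsto(\int fg_n)_n$ with $(g_n)$ dense in the unit ball of $L^1$.
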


\begin{proof} To see (i), consider for each $x\in [0,1],$
the ideal $M_x=\{f:\lim_{s\to x} f(s)=0\}.$ Thus \[\bigcap_{\{x:x\in[0,1]\}}M_x=\{0\}, \text{ but  }
\bigcap_{\{x_n:n\in  \mathbb N\}}M_{x_n}\neq \{0\}.\]
 Indeed, the characteristic function of $\chi_{[0,1]\setminus V}$ where $V$ is an open set containing $\{x_n:n\in  \mathbb N\}$ is in 
$\cap_{n \in \mathbb N}M_{x_n}, $ and such a function is not zero if the Lebesgue measure of $[0,1]\setminus V$ is positive.

(ii) The space  $L^{\infty}([0,1],dx)$ can be embedded isometrically into $\ell^{\infty}$ and vise versa. An application of a classical theorem by Pelczy\'nski says that $\ell^{\infty}$ and $L^{\infty}([0,1],dx)$ are  isomorphic.

(iii)   First, we give an especial embedding isometry
 \[T:L^{\infty}([0,1],dx)\longrightarrow  \ell^{\infty}. \]
 We defined  
 $T(f)(n)=\frac{1}{b_n-a_n}\int_{a_n}^{b_n}f(x)~dx$ where $f\in L^{\infty}([0,1],dx)$ and $([a_n,b_n])_{n=1}^{\infty}$ is an enumeration of all closed intervals whose extremes are rational numbers and
  $0\leq a_n<b_n\leq 1.$

  The Lebesgue differentiation theorem says that almost everywhere \[f(t)=\lim_{k \to \infty} \frac{1}{b_{n_k}-a_{n_k}}\int_{a_{n_k}}^{b_{n_k}}f(x)~dx\] whenever $t\in [a_{n_k},b_{n_k}]$ and $\lim_{k\to \infty}(b_{n_k}-a_{n_k})=0,$ but this means that
  $T(f)(n_k)\to f(t).$ In particular, if $t$ is one those points and $|f(t)|>c||f||_0$ we have that that there are infinitely many $k$ for which $|T(f)(n_k|>c||f||_0.$ (For any $0<c<1$.)
  
  Consequently $\Phi  \circ T $ is also an isometry, where $\Phi$ is the canonical projection from $\ell^{\infty}$ onto $\ell^{\infty}/c_0.$ 
  Then the  subspaces that are the image of the subspaces  in  part (i), 
   we keep the notation,  show that $\ell^{\infty}/c_0$ does not satisfy $C_0$ since
  \[\Phi  \circ T (\chi_{[0,1]\setminus V})\in   \bigcap_{\{x_n:n\in  \mathbb N\}}\Phi  \circ T(M_{x_n})~\text{  and  }~||\Phi  \circ T (\chi_{[0,1]\setminus V})||=1.\]
\end{proof}

\begin{cor} If a TVS contains a copy of 
$\ell^{\infty},$ then 
it does not satisfy $(C_0).$   
\end{cor}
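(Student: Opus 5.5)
The corollary follows by combining Proposition \ref{L infty}(ii) with Proposition \ref{direct sum}(i) and the remark on isomorphic copies. I read ``contains a copy of $\ell^{\infty}$'' as: there is a closed subspace $F$ of the TVS $E$ and a linear bicontinuous bijection $T:\ell^{\infty}\to F$.

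The argument goes by contradiction. Suppose $E$ satisfies $(C_0)$.

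First, I need $F$ to be a closed subspace, since all subspaces in the paper are assumed closed. Because $\ell^{\infty}$ is complete and $T$ is a topological isomorphism onto $F$, the subspace $F$ is complete in the induced uniform structure. Since $E$ is Hausdorff, a complete subspace is closed. So Proposition \ref{direct sum}(i) applies and gives that $F$ satisfies $(C_0)$.

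Next, I transfer the property back to $\ell^{\infty}$ through $T$, using the remark after Proposition \ref{direct sum}. Concretely, suppose $\{N_i\}_{i\in I}$ is a family of closed subspaces of $\ell^{\infty}$ with $\cap_i N_i=\{0\}$. The images $T(N_i)$ are closed subspaces of $F$, because $T$ is a homeomorphism onto $F$. Since $T$ is injective, $\cap_i T(N_i)=T(\cap_i N_i)=\{0\}$. By $(C_0)$ for $F$ there is a countable $I_0\subset I$ with $\cap_{i\in I_0}T(N_i)=\{0\}$. Pulling back by $T^{-1}$ gives $\cap_{i\in I_0}N_i=\{0\}$. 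Hence $\ell^{\infty}$ satisfies $(C_0)$, which contradicts Proposition \ref{L infty}(ii).

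There is no substantive obstacle here. The only point needing care is the closedness of $F$, which uses completeness of $\ell^{\infty}$ and the Hausdorff assumption on $E$. If ``copy'' were meant merely as an isomorphic image that is not a priori closed, that step would supply closedness. Otherwise one can simply take $F$ closed by hypothesis.
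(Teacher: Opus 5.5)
Your proof is correct and is the same argument as the paper's. The paper simply cites Proposition~\ref{direct sum}(i) and leaves implicit both the invariance of $(C_0)$ under isomorphisms and the failure of $(C_0)$ for $\ell^{\infty}$ from Proposition~\ref{L infty}(ii); your extra step showing that the copy $F$ is automatically closed, using completeness of $\ell^{\infty}$ and the Hausdorff property of $E$, is a valid detail the paper omits.
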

\begin{proof}
    Apply Proposition \ref{direct sum} (i).
\end{proof}

\begin{rem}
  \label{haydon} If one wants to find a non-separable Banach space failing property $(C_0)$ (if such a space exists), a possible candidate is the space constructed by Haydon \cite{hay}. It would be interesting to determine if the exotic properties of this non-reflexive space, which does not contain a copy of  $\ell^{\infty}$, are sufficient to produce a failure of property $(C_0).$
\end{rem}

\section{The Theorem and Its Consequences}

\begin{thm} 
Let $E$ be a separable F-space. Then $E$ satisfies $(C_0),$ the countable intersection property for subspaces. \label{thm}
\end{thm}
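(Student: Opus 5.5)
Let $\{M_i\}_{i\in I}$ be closed subspaces of $E$ with $\bigcap_{i\in I}M_i=\{0\}$. The plan is to pass to complements: for each $i$ let $U_i=E\setminus M_i$, which is open. Since the intersection of the $M_i$ is $\{0\}$, the family $\{U_i\}_{i\in I}$ covers $E\setminus\{0\}$. The whole argument then rests on one topological fact: a separable metrizable space is second countable, hence hereditarily Lindel\"of. Here the $F$-space hypothesis is used only through metrizability; completeness plays no role. So the open subspace $E\setminus\{0\}$ is Lindel\"of.

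The key steps, in order, are as follows.

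\textbf{Step 1.} Since $E$ is metrizable and separable, fix a countable dense set $\{d_n\}$. The balls $B(d_n,1/k)$ for the translation-invariant metric then form a countable base $\mathcal B$. Only metrizability of the original topology is needed here; the linear structure is not.

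\textbf{Step 2.} Let $\mathcal B_0$ be the set of those $B\in\mathcal B$ for which $B\subset U_i$ for some $i$. For each $B\in\mathcal B_0$ choose one such index $i_B$, and put $I_0=\{i_B: B\in\mathcal B_0\}$. This set is countable.

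\textbf{Step 3.} Suppose $x\neq 0$. Then $x\notin M_j$ for some $j$, so $x\in U_j$. Since $U_j$ is open, there is $B\in\mathcal B$ with $x\in B\subset U_j$. Hence $B\in\mathcal B_0$, and $x\in B\subset U_{i_B}$, which means $x\notin M_{i_B}$. Therefore $\bigcap_{i\in I_0}M_i=\{0\}$.

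I do not expect a serious obstacle; the main point to get right is Step 1, namely that separability plus metrizability yields a countable base. This is the classical fact that a separable metric space is second countable. The standard check is that the balls $B(d_n,r)$ with $r$ rational form a base: given $x\in U$ open with $B(x,\varepsilon)\subset U$, pick $d_n$ with $\rho(x,d_n)<\varepsilon/3$ and rational $r\in(\varepsilon/3,2\varepsilon/3)$; then $x\in B(d_n,r)\subset B(x,\varepsilon)$. It is worth remarking in the write-up why the argument fails for the separable space $C_p(\mathbb M)$ of the first proposition: that space is not metrizable, so separability does not give second countability, and $E\setminus\{0\}$ need not be Lindel\"of.

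Note also that closedness of the $M_i$ is essential, since it is what makes the $U_i$ open. This is consistent with the paper's standing convention that all subspaces are closed.
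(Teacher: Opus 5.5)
Your proof is correct, and it takes a genuinely different route from the paper's.

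The paper argues by contradiction. Assuming every countable subintersection is nonzero, it builds by transfinite induction a strictly decreasing chain $\{N_\alpha\}_{\alpha<\omega_1}$ of countable intersections of the $M_i$. It then picks $x_\alpha\in N_\alpha\setminus N_{\alpha+1}$ and finds $c>0$ with $d(x_\alpha,N_{\alpha+1})\ge c$ for uncountably many $\alpha$. These $x_\alpha$ are pairwise $c$-apart, which separability forbids. In effect the paper proves by hand that a separable metric space admits no strictly decreasing $\omega_1$-sequence of closed sets, which is a standard equivalent form of hereditary Lindel\"ofness. You instead quote second countability and read $I_0$ off a countable base directly, with no ordinals and no contradiction.

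Your version makes it transparent that the statement is purely topological. In a hereditarily Lindel\"of space, every family of closed sets (not only subspaces) has a countable subfamily with the same intersection. Only separability and metrizability are used, which is also all the paper's proof really uses. Your argument also shows that $(C_0)$ holds in any TVS $E$ for which $E\setminus\{0\}$ is Lindel\"of, for instance any TVS with a countable network. This gives the paper's LF-space corollary at once, since a countable inductive limit of separable Fr\'echet spaces is a countable union of continuous images of second countable spaces. It also covers separable non-metrizable spaces such as the weak$^*$ dual of an infinite-dimensional separable Fr\'echet space, which is a countable union of weak$^*$-compact metrizable polars. That is relevant to the paper's question about which separable TVSs fail $(C_0)$.

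What the paper's chain argument buys is a mechanism that needs no countable base, namely an uncountable strictly decreasing chain of closed subspaces. The paper reuses it in Proposition~\ref{allseparable} to produce an uncountable M-basis in a setting with no metric. For the extension to $(C_1)$ in Section~4, both arguments work equally well, since in a metric space the weight equals the density character.
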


\begin{proof} 
We proceed by contradiction. Suppose that there exists an uncountable index set $I$ and a family of closed subspaces $\{M_i\}_{i \in I}$ such that 
\begin{equation}
\bigcap_{i \in I} M_i = \{0\}, \label{contradiction1}
\end{equation} 
and yet, for every countable subset $I_0 \subset I$, 
\begin{equation} 
\bigcap_{i \in I_0} M_i \neq \{0\}. \label{contradiction2} 
\end{equation} 
Let $\omega_1$ denote the first uncountable ordinal. By transfinite induction, we construct a strictly decreasing sequence of closed subspaces $\{N_\alpha\}_{\alpha < \omega_1}$ where each $N_\alpha$ is a countable intersection of elements from $\{M_i\}_{i \in I}$.

For the base case, choose an arbitrary $i_0 \in I$ and define $N_0 = M_{i_0}$. Now, let $\beta < \omega_1$ and assume that $N_\alpha$ has been defined for all $\alpha < \beta$ such that each $N_\alpha$ is a countable intersection of elements from $\{M_i\}_{i \in I}$. 

We split the induction step into two cases based on the nature of $\beta$:
\begin{itemize}
    \item {Case 1 ($\beta$ is a successor ordinal, $\beta = \gamma + 1$):} By hypothesis, $N_\gamma \neq \{0\}$. Due to \eqref{contradiction1}, $N_\gamma$ cannot be contained in every $M_i$ for $i \in I$. Thus, there exists $i_\beta \in I$ such that $N_\gamma \not\subset M_{i_\beta}$. We define $N_\beta = N_\gamma \cap M_{i_\beta}$, yielding the strict inclusion $N_\beta \subsetneqq N_\gamma$.
    \item {Case 2 ($\beta$ is a limit ordinal):} Define $N' = \bigcap_{\alpha < \beta} N_\alpha$. Since $\beta$ is a countable ordinal, $N'$ is a countable intersection of the original subspaces, so $N' \neq \{0\}$ by \eqref{contradiction2}. By the same argument as in Case 1, there exists $i_\beta \in I$ such that $N' \not\subset M_{i_\beta}$. Defining $N_\beta = N' \cap M_{i_\beta}$ ensures that $N_\beta \subsetneqq N'$ and consequently $N_\beta \subsetneqq N_\alpha$ for all $\alpha < \beta$.
\end{itemize}
By construction, $\{N_\alpha\}_{\alpha < \omega_1}$ is a strictly decreasing transfinite sequence of closed subspaces of $E$.

For each $\alpha < \omega_1$, choose an element $x_{\alpha} \in N_{\alpha} \setminus N_{\alpha+1}$. Let $d$ be an invariant metric compatible with the topology of the F-space $E$. Because each $N_{\alpha + 1}$ is closed, the distance from $x_\alpha$ to $N_{\alpha+1}$ is strictly positive:
\[f(\alpha) = \inf\{d(x_{\alpha}-x) : x \in N_{\alpha + 1}\} > 0.\] 
Since $\omega_1$ is uncountable, the  regularity of $\omega_1$ implies there exists a constant $c > 0$ such that the set $J = \{\alpha < \omega_1 : f(\alpha) \geq c\}$ is uncountable. 

Let $\alpha, \beta \in J$ be distinct indices. Assuming without loss of generality that $\alpha < \beta$, we have $x_{\beta} \in N_{\beta} \subset N_{\alpha + 1}$. By the definition of infimum, it follows that $d(x_{\alpha}-x_{\beta}) \geq f(\alpha) \geq c$. 

Since $E$ is separable, it contains a countable dense subset $Y$. For each $\alpha \in J$, we may choose an element $y_{\alpha} \in Y$ satisfying $d(x_{\alpha}-y_{\alpha}) < \frac{c}{3}$. By the triangle inequality, for any distinct $\alpha, \beta \in J$, we obtain: 
\[d(y_{\alpha}-y_{\beta}) \geq d(x_{\alpha}-x_{\beta}) - d(x_{\alpha}-y_{\alpha}) - d(x_{\beta}-y_{\beta}) \geq  \frac{c}{3}.\] 
This implies that the mapping $\alpha \mapsto y_{\alpha}$ is an injective function from the uncountable set $J$ into the countable set $Y$, which is a contradiction. This completes the proof. 
\end{proof}
As an immediate consequence of this theorem, we obtain:
\begin{cor}
Separable Fr\'echet spaces, and consequently separable Banach spaces, satisfy ($C_0$).
\label{separable frechet}
\end{cor}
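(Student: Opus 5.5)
The corollary follows at once from Theorem \ref{thm}, so the plan is simply to check that every space named in the statement is a separable $F$-space in the sense fixed in the introduction.

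First, recall that a Fr\'echet space is, by the definition in the introduction, a complete metrizable TVS with a translation-invariant metric that in addition has a local convex basis. Forgetting the local convexity, every Fr\'echet space is therefore an $F$-space. Hence if $E$ is a separable Fr\'echet space, it is a separable $F$-space, and Theorem \ref{thm} applies directly to give $(C_0)$.

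Second, a Banach space $(E,\|\cdot\|)$ is a Fr\'echet space. The metric $d(x,y)=\|x-y\|$ is translation-invariant, complete and compatible with the topology, and the open balls centered at $0$ form a local basis of convex sets. A separable Banach space is thus a separable Fr\'echet space, so the previous step applies.

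There is no genuine obstacle here. The only point deserving a sentence is that the hypotheses of Theorem \ref{thm} use exactly the invariant metric $d$ and separability, and both are inherited verbatim in the two inclusions
\[
\text{Banach} \subset \text{Fr\'echet} \subset F\text{-space}.
\]
Closedness of the subspaces $M_i$ refers to the same topology in each of these settings, so the property $(C_0)$ transfers with no change.
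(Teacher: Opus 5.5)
Your proposal is correct and matches the paper, which states this corollary as an immediate consequence of Theorem \ref{thm} through the inclusions Banach $\subset$ Fr\'echet $\subset$ $F$-space. Your check that the norm metric is complete and translation-invariant with convex balls simply spells out what the paper leaves implicit.
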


A TVS $E$ is said to be an LF space if it is  a countably inductive  limit of Fr\'echet spaces, Tr\`eves \cite{tre} chapter 13.
The following corollary  broadens the scope of our main result, showing that the conclusion holds even when $E$ is not metrizable. This occurs whenever $E_n\subsetneq E_{n+1}~\forall n.$ 
\begin{cor} Let $E$ be an LF-space, countable inductive limit of separable Fr\'echet spaces. Then $E$ is separable and satisfies ($C_0$).
\end{cor}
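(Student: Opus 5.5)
The corollary has two claims: that $E$ is separable, and that $E$ satisfies $(C_0)$.

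\emph{Separability.} Write $E=\bigcup_n E_n$, where $(E_n)$ is an increasing sequence of separable Fr\'echet spaces. The inductive limit topology makes each inclusion $E_n\hookrightarrow E$ continuous. Choose a countable dense set $Y_n\subset E_n$ for each $n$, and put $Y=\bigcup_n Y_n$, which is countable. Let $x\in E$ and let $U$ be an open neighbourhood of $x$ in $E$. Then $x\in E_n$ for some $n$, and $U\cap E_n$ is open in $E_n$ and contains $x$. Hence it meets $Y_n$, so $U$ meets $Y$. Therefore $Y$ is dense in $E$.

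\emph{Property $(C_0)$.} Let $\{M_i\}_{i\in I}$ be closed subspaces of $E$ with $\bigcap_{i\in I}M_i=\{0\}$. For each $n$, the set $M_i\cap E_n$ is a closed subspace of $E_n$, because the inclusion is continuous. Moreover
\[\bigcap_{i\in I}(M_i\cap E_n)=E_n\cap\bigcap_{i\in I} M_i=\{0\}.\]
Each $E_n$ is a separable Fr\'echet space, so Corollary \ref{separable frechet} (equivalently Theorem \ref{thm}) gives a countable $I_n\subset I$ with $\bigcap_{i\in I_n}(M_i\cap E_n)=\{0\}$. Set $I_0=\bigcup_n I_n$, which is countable. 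Take $x\in\bigcap_{i\in I_0}M_i$. Then $x\in E_n$ for some $n$, so
\[x\in E_n\cap \bigcap_{i\in I_n}M_i=\{0\}.\]
Thus $\bigcap_{i\in I_0}M_i=\{0\}$.

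The only point that needs care is that the traces $M_i\cap E_n$ are closed in $E_n$. This follows solely from continuity of the inclusion maps into the inductive limit. No strictness or regularity of the inductive limit is needed for this, nor for the density argument above. The remaining step is the simple observation that $E$ is the union of the $E_n$, so every point of a countable intersection lies in some single step $E_n$. The real content is the main theorem, applied once for each $n$.
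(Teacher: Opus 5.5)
Your proof is correct and is essentially the argument the paper leaves implicit: the paper gives no proof and presents this corollary as a direct consequence of Theorem \ref{thm}. Your steps are to intersect each $M_i$ with the steps $E_n$, apply the theorem in each separable Fr\'echet space $E_n$ to get countable $I_n$, take $I_0=\bigcup_n I_n$, and show that $\bigcup_n Y_n$ is dense. Your observation that only continuity of the inclusions $E_n\hookrightarrow E$ is needed, with no strictness or regularity of the inductive limit, is also accurate.
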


\begin{rem}
 For Hilbert spaces, separable is equivalent to satisfying $(C_0).$ This is immediate since $\oplus$ can be interpreted as an orthogonal direct sum.  
\end{rem}

Let $H$ be a separable Hilbert space and $\mathcal B(H)$ be the bounded linear operators on $H.$

\begin{cor} Let $A$ be a $C^*$
algebra contained in $\mathcal B(H).$ If $\cap_{T\in A} Ker T=\{0\},$ then there exists an injective $S\in A.$ Moreover, $T$
 can be chosen to be a positive linear operator.
\end{cor}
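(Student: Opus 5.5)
The plan is to apply Theorem \ref{thm}, with $H$ as the separable F-space, to the family of closed subspaces $M_T=\operatorname{Ker} T$, $T\in A$. Each kernel is closed because $T$ is bounded. By hypothesis $\bigcap_{T\in A}\operatorname{Ker}T=\{0\}$, so the theorem gives a countable subfamily $\{T_n\}_{n\in\mathbb N}\subset A$ with $\bigcap_n \operatorname{Ker}T_n=\{0\}$. If $A=\{0\}$ the hypothesis forces $H=\{0\}$ and there is nothing to prove, so we may discard zero operators and assume every $T_n\neq 0$.

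Next we combine the countably many operators into a single positive one. Put
\[S=\sum_{n=1}^{\infty} \frac{1}{2^n\|T_n\|^2}\,T_n^*T_n .\]
Since $A$ is a $C^*$-algebra, each $T_n^*T_n$ lies in $A$ and is positive. The series converges absolutely in operator norm, because the $n$-th term has norm at most $2^{-n}$. As $A$ is norm closed, $S\in A$, and $S\geq 0$ as a norm limit of positive operators.

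Injectivity comes from the quadratic form. If $Sx=0$, then
\[0=\langle Sx,x\rangle=\sum_{n}\frac{\|T_nx\|^2}{2^n\|T_n\|^2},\]
which forces $T_nx=0$ for every $n$. Hence $x\in\bigcap_n\operatorname{Ker}T_n=\{0\}$, so $S$ is injective and positive. (The ``$T$'' in the final sentence of the statement should read $S$.)

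I expect no serious obstacle; the whole reduction rests on Theorem \ref{thm}. The only point needing care is that the infinite sum stays inside $A$. This uses norm closedness of the $C^*$-algebra, together with the normalization by $\|T_n\|^2$ that makes the series converge in norm rather than merely strongly.
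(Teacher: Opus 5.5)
Your proof is correct and follows the paper's argument: apply Theorem \ref{thm} to the kernels, form a norm-convergent weighted sum of positive elements of $A$, and deduce injectivity from $\langle Sx,x\rangle=0$. The only difference is minor: the paper first replaces each $T$ by $\sqrt{T^*T}$ (same kernel) and then extracts the countable subfamily, whereas you extract first and use $T_n^*T_n$. Your version avoids the functional calculus but needs $A$ to be closed under $T\mapsto T^*T$; the paper's ordering is what lets its subsequent Remark relax the hypothesis to a Banach space of operators whose positive elements have trivial common kernel.
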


\begin{proof} Since $Ker(T)=Ker \big(\sqrt{T^*T}\big)$ and $\sqrt{T^*T} \in A,$ we have 
\[\bigcap_{\{T \in A, 0\leq T\}} Ker (T)=\{0\}.\] However, $H$ satisfies ($C_0$), and consequently there exists
$T_n \in A$ with $0\leq T_n ~\forall n$ such that
$\cap_{\{n \in \mathbb N\}} Ker( T_n)=\{0\}.$ Set $S=\sum_{n=1}\frac{2^{-n}}{||T_n||}T_n.$ Then
$\langle Sx,x\rangle=0$ implies that $\langle T_nx,x\rangle=0$ and so $T_n(x)=0 ~\forall n$ and therefore $x=0.$
\end{proof}

\begin{rem}
(i) For the proof to work, it suffices that $A\subset\mathcal B(H)$ be a Banach space, and $\cap_{\{T \in A, 0< T\}} Ker (T)=\{0\}.$

(ii) If $A$ is not a $C^*$ algebra, the existence of an injective operator cannot be ensured. For example, let $T$ be the unilateral backward shift, $T((x_1,x_2,\cdots))=(x_2,x_3<\cdots)$ for $x=(x_1,x_2, \cdots) \in \ell^2(\mathbb N).$ The algebra generated by $T,T^2,T^3, \cdots$
does not contain an injective operator although $\cap_n Ker(T^n)=\{0\}.$

(iii) If $H$ is not separable and $A=\mathcal K(H)$ is the $C^*$ algebra generated by the compact operators, then $\cap_{\{T \in \mathcal K(H)\}} Ker (T)=\{0\}.$

However, for $T$ compact, $Ker(T)$ has separable codimension and 
$\cap_{n\in\mathbb N} Ker(T_n)$ also has separable codimension and therefore is not $\{0\}.$

\end{rem}

The following proposition is a partial converse for Proposition \ref{marku}. 

\begin{prop} \label{allseparable}
Assume that $E$ is a separable TVS in which every pair of closed subspaces $X\subsetneq Y$   is such that $X$ has an M-basis that can be extended to an M-basis  of $Y.$ Then $E$ satisfies $(C_0).$
\end{prop}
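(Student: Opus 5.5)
We argue by contradiction, reusing the transfinite construction from the proof of Theorem \ref{thm}. Suppose $\{M_i\}_{i\in I}$ is a family of closed subspaces with $\bigcap_{i\in I}M_i=\{0\}$, yet every countable subintersection is nonzero. Exactly as in that proof, we build a strictly decreasing family $\{N_\alpha\}_{\alpha<\omega_1}$ of closed subspaces, each a countable intersection of the $M_i$. That part of the argument used only the failure of $(C_0)$, not metrizability, so it carries over verbatim. The metric step is what must be replaced. We will use the hypothesis on M-bases to turn a strictly decreasing $\omega_1$-chain into an uncountable M-basis of some closed subspace, and then contradict separability.

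The key step is to produce M-bases compatible with the chain. Fix $\alpha<\omega_1$. Since $N_{\alpha+1}\subsetneq N_\alpha$, the hypothesis gives an M-basis of $N_{\alpha+1}$ that extends to an M-basis of $N_\alpha$. Because $N_{\alpha+1}\neq N_\alpha$ and the basis of $N_{\alpha+1}$ is complete only in $N_{\alpha+1}$, the extension must contain a new pair $(x_\alpha,x_\alpha^*)$ with $x_\alpha\in N_\alpha\setminus N_{\alpha+1}$. Moreover $x_\alpha^*$, a functional on $N_\alpha$, vanishes on the basis vectors of $N_{\alpha+1}$ and hence on all of $N_{\alpha+1}$ by continuity.

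We then check biorthogonality across different stages. Take $\alpha<\beta$. Then $x_\beta\in N_\beta\subset N_{\alpha+1}$, so $x_\alpha^*(x_\beta)=0$. The reverse relation $x_\beta^*(x_\alpha)$ is not directly controlled, because $x_\alpha\notin N_\beta$ and $x_\beta^*$ is defined only on $N_\beta$. We therefore obtain a ``triangular'' system: $x_\alpha^*(x_\beta)=0$ for $\beta>\alpha$ and $x_\alpha^*(x_\alpha)=1$.

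This triangular system already suffices for the contradiction, without a full M-basis. Extend each $x_\alpha^*$ to $E$, or work inside $N_0$ with Hahn--Banach if it is available. If it is not, we instead use that $x_\alpha^*$ is continuous on $N_\alpha$, since all the $x_\beta$ with $\beta\ge\alpha$ lie in $N_\alpha$. Let $Y$ be a countable dense subset of $N_0$; we use that a subspace of a separable metrizable-like space is separable. Here lies the main obstacle: in a general separable TVS, subspaces need not be separable, as shown by the examples of Doma\'nski and K\c{a}kol--Leiderman--Morris cited above. The plan is to use the hypothesis once more, applied to the pair $N_{\alpha+1}\subsetneq N_\alpha$ for every $\alpha$ simultaneously inside $E$ itself. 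Applying the hypothesis to $\{0\}\subsetneq N_0$ and $N_0\subsetneq E$ gives an M-basis of $E$ extending one of $N_0$, and separability of $E$ forces that M-basis to be countable: each $x^*_j$ is continuous and nonzero on a dense countable set. So the best approach is to apply the hypothesis to the pair $\overline{\operatorname{span}}\{x_\alpha:\alpha<\omega_1\}\subsetneq E$. This yields an M-basis of $E$ whose restriction behaves like the uncountable triangular system, which can then be refined to an uncountable biorthogonal family in $E$. Proposition \ref{marku}'s counting argument then applies: an M-basis of a separable TVS with $\card(I)>\aleph_0$ is impossible, because the totality functionals separate an uncountable family while the dense countable set $Y$ must meet every nonempty open set $\{x: |x_j^*(x)-1|<1/2\}$, and these sets overlap only in a controlled way.

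The step we expect to fail or need the most care is this last one. It consists of extracting a genuinely biorthogonal uncountable family in $E$ from the chain, and then proving that separability of $E$ bounds the cardinality of any M-basis. We would first try the direct argument that an M-basis $\{(x_j,x_j^*)\}$ of a separable $E$ is countable. Map each $j$ to a point $y_j\in Y$ with $|x_j^*(y_j)-1|<1/2$, and then show that $j\mapsto y_j$ is finite-to-one or countable-to-one using completeness and totality.
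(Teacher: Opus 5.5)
Your chain $\{N_\alpha\}$ and the triangular pairs $(x_\alpha,x_\alpha^*)$ are fine. The step you flag at the end, however, is not merely delicate: it is false, so the contradiction never materializes. In a non-metrizable TVS, separability does not bound the cardinality of a biorthogonal system or of an M-basis. Take $E=\mathbb C^{[0,1]}$ with the product topology. It is complete, and it is separable by the Hewitt--Marczewski--Pondiczery theorem (rational step functions are dense). The unit vectors with the coordinate functionals, $\{(e_t,\pi_t)\}_{t\in[0,1]}$, form an uncountable M-basis. Your map $j\mapsto y_j$ can even be constant, since $\mathbf 1$ lies in every set $\{x:|\pi_t(x)-1|<1/2\}$. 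In this space the triangular pairs can be taken to be $(e_t,\pi_t)$ themselves, a genuine uncountable biorthogonal system, so no ``refinement'' can produce a contradiction. Besides, the M-basis the hypothesis gives for $\overline{\operatorname{span}}\{x_\alpha\}\subsetneq E$ need not be related to the $x_\alpha$ at all. The counting argument needs a translation-invariant metric $d$: continuity gives $r_i>0$ with $|x_i^*|<1$ on $\{x:d(x)<r_i\}$, and $x_i^*(x_i-x_j)=1$ then forces $d(x_i-x_j)\ge r_i$, as in the proof of Theorem \ref{thm}.

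In fact the statement is false as written, so no repair of this step exists without an extra assumption. The product topology on $\mathbb C^I$ is $\sigma(\mathbb C^I,\mathbb C^{(I)})$, so each closed subspace $F$ is the annihilator of $W=F^\perp\subset\mathbb C^{(I)}$. An algebraic complement of $W$ dualizes to a continuous projection onto $F$, and $F\cong(\mathbb C^{(I)}/W)^*\cong\mathbb C^J$ (algebraic dual, weak$^*$ topology). Thus for closed $X\subsetneq Y$ you can write $Y=X\oplus Z$. Combining unit-vector M-bases of $X$ and $Z$, with the functionals composed with the projections, gives an M-basis of $Y$ extending that of $X$. So $\mathbb C^{[0,1]}$ satisfies the hypothesis, yet it fails $(C_0)$ by Proposition \ref{marku}: $\bigcap_t\ker\pi_t=\{0\}$, while each countable subintersection contains some $e_t$. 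The paper's two-line proof rests on the same unjustified claim (``an M-basis which is countable''). The proposition needs something like metrizability, and then the argument of Theorem \ref{thm} already yields $(C_0)$.
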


\begin{proof}
By choosing $X=\{0\}$ and $Y=E$ we have an M-basis which is countable.

The proof now follows the outline of the proof of Theorem \ref{thm} and the contradiction will be that it is possible to build an uncountable M-basis for $E.$
\end{proof}

\section{ Questions and a possible extension} 

We present an example to motivate and frame the first question; its part (iii) relates to Remark \ref{haydon}. The other questions test the limits of our results.

The concluding remark points to possible
$(C_{\alpha})$ extensions for $0<\alpha.$
If $E$ is a TVS, the requirement is that its density character is at least $\aleph_{\alpha}.$ 

\begin{quest} 

(i) Is it true that any non separable TVS does not satisfy ($C_0$)? 

(ii) What if, in addition, it is also locally convex?

(iii) What if it is a Banach space?
\end{quest}

The following direct approach does not seem to work for (ii): Let $E$ be a locally convex TVS, and for each non-zero $x \in E$, consider $\varphi_x \in E^*$ with $\varphi_x(x) = 1$, so that 
\[E = Ker(\varphi_x) \bigoplus \text{span}(x).\] Then $\cap_{x \in E} Ker(\varphi_x) = \{0\}$, and the goal is to show that for any sequence 
\[\{x_n\}_{n \in \mathbb{N}} \subset E, \text{ we have } \bigcap_{n \in \mathbb{N}} Ker(\varphi_{x_n}) \neq \{0\}.\]
The following example illustrates the issue. Although Proposition \ref{L infty} (ii) shows that $\ell^{\infty}$ fails $(C_0),$ the approach outlined above cannot establish this fact.

\begin{exmp} Let $E=\ell^{\infty}$ and $\varphi_n(x)=x_n$
where $x=(x_k)_{k=1}^{\infty}.$
    \[Ker(\varphi_n)=M_{\varphi_n}=\{x\in \ell^{\infty} :x_n =0\}\] \[\text{ but }  \bigcap_{n\in \mathbb N} M_{\varphi_n}=\{0\}.\]
\end{exmp}

\begin{quest} What are the separable TVSs that do not satisfy $(C_0)?$
\end{quest}

\begin{quest}
Assume that $E$ is a separable TVS and all its subspaces are separable. Does $E$ satisfy $(C_0)?$
\end{quest}

$E$ could certainly have an uncountable chain of subspaces. For example
$M_a=\chi_{(a,1]}L^1([0,1],dx)$ where $0<a<1.$ 
\begin{quest}
Are there any examples of a TVS obtained via Prop \ref{allseparable} but not obtainable as a corollary of Theorem \ref{thm}?
\end{quest}

Another natural direction is to extend property $(C_0)$ to arbitrary cardinals. 
Thus, the next step is to consider a TVS $E$ whose density character is at least $\aleph_1.$ In this context, property $(C_1)$ 
means that if $\cap_{i \in I} M_i=\{0\}$ implies that there exist $I_1\subset I$ with $\card(I_1)\leq \aleph_1$ and $\cap_{i \in I_1} M_i=\{0\}$ whenever $M_i$ are closed subspaces of $E.$ 

Theorem \ref{thm} naturally extends to this new setting, with essentially the same proof. Let $\omega_2$ the first ordinal such that $\card(\omega_2)=\aleph_2.$ We have to consider $\omega_2$ instead of $\omega_1$ and use the fact that for any collection 
\[\{c_i:i\in I \text{ and } c_i>0\}\]    with  $\card(I)=\aleph_2,$ there is a subset $J\subset I$ with $\card(J)=\aleph_2$ and 
\[0<\text{inf}\{c_i:i\in J\}.\]

Proposition \ref{marku} can also be adapted when it is required that the cardinality of the M-basis be at least $\aleph_2.$


\begin{thebibliography}{9}

\bibitem{amli} Amir, D. ; Lindenstrauss, J. \emph{The structure of weakly compact subsets in Banach spaces.} Ann. of Math. (2) {\bf 88} , 35–46(1968) .


\bibitem {cor} Corson, H. H. 
\emph {The weak topology of a Banach space.} 
Trans. Am. Math. Soc. {\bf 101}, 1-15 (1961).

\bibitem{day}  Day, M.M. \emph{ The spaces 
 $L^p$ with  $0<p<1.$}
Bull. Amer. Math. Soc. {\bf 46}, 816-823 (1940).

\bibitem {dom} Doma\'nski, P.
\emph{Nonseparable closed subspaces in separable products of topological vector spaces, and q-minimality.} 
Arch. Math. {\bf 41}, 270–275 (1983).

\bibitem{dur} Duren, P.
\emph{Theory of $H^p$ spaces.} Pure and applied mathematics {\bf 38} Academic Press. New York 1970. Dover reprint (2000).

\bibitem{fhhmpz} Fabian, M.; Habala, P.; Hájek, P.; Montesinos Santalucía, V.; Pelant, J.; Zizler, V. 
\emph{Functional Analysis and Infinite- Dimensional Geometry. }

CMS Books in Mathematics/Ouvrages de Mathématiques de la SMC. {\bf 8}. New York, NY: Springer. ix, 451 p. (2001).

\bibitem{hay} Haydon, R.
\emph{A non-reflexive Grothendieck space that does not contain $\ell^{\infty}.$}
Israel J. of Math, Vol. {\bf 40,} No. 1, 1981

\bibitem{joh}
Johnson, W. B.
\emph{No infinite dimensional P space admits a Markuschevich basis.} 
Proc. Am. Math. Soc. {\bf 26,} 467-468 (1970).

\bibitem{klm} K\c{a}kol, J.;  Leiderman, A.G.;  Morris, S.A. \emph{Nonseparable closed vector subspaces of separable topological vector spaces.}
{\bf 182}, 39–47 (2017)




\bibitem {pol} Pol, R.
\emph{ On a question of H. H. Corson and some related problems.} 
Fundam. Math. {\bf 109}, 143–154

\bibitem{ruf} Ruf, T. \emph{Separability and submetrizability in locally convex spaces.} 
  Annals of Functional Analysis
{\bf 17}, article number 48 (2026)

\bibitem {tre} Tr\`eves, F. \emph{Topological vector spaces, distributions and kernels.}
Pure and Applied Mathematics (Academic Press) 25. New York-London: Academic Press. XVI, 565 p. (1967).

 \end{thebibliography}
\end{document}